\documentclass{amsart}
\usepackage[utf8]{inputenc}
%\usepackage{setspace}
% \usepackage{subfigure} 
% \usepackage{float}
%\usepackage{geometry}
%\geometry{papersize={166mm,254mm}, bottom = 25mm, left = 30mm, right = 25mm}
% \usepackage{graphicx}
%\usepackage{color}
% \usepackage{cite}
%\usepackage{hyperref}
%\hypersetup{
 %   colorlinks=true,
 %   linkcolor=blue,
 %   filecolor=magenta,      
 %   urlcolor=magenta,
 %   citecolor=blue,
 %   pdfpagemode=FullScreen,
 %   }
%\papersize={166mm, 254mm }

%\usepackage{trajan}
 
%\usepackage[T1]{fontenc}

%\usepackage{tgpagella}

%\usepackage[english]{babel}
%\linespread{1.05}
%\usepackage{verbatim} % for comments
%\usepackage{listings} % for comments
% \usepackage{natbib}
%\usepackage{fancyhdr}
% \usepackage{graphicx}
\usepackage{amssymb, amsmath, amsbsy} 
% \usepackage{upgreek} 
%\usepackage{cancel}
%\usepackage{MnSymbol}
%\usepackage{mathdots} 
%\usepackage{amsthm}
%\usepackage{enumerate}
%\usepackage[shortlabels]{enumitem}
%\usepackage{mathrsfs} 
% \usepackage{stackrel} 
%\usepackage{mathabx}
%\usepackage{tikz}
%\setlength{\textwidth}{130mm}
%\setlength{\textheight}{200mm}
%\setlength{\topmargin}{-5mm}
%\setlength{\bmargin}{-10mm}
%\setlength{\oddsidemargin}{7mm}
%\setlength{\evensidemargin}{-7mm}
%\onehalfspacing
\allowdisplaybreaks

\newtheorem{theorem}{Theorem}[section]
\newtheorem{definition}[theorem]{Definition}
\newtheorem{proposition}[theorem]{Proposition}
\newtheorem{corollary}[theorem]{Corollary}
\newtheorem{remark}[theorem]{Remark}

\title[Singular Ricci Flows]{Singular Ricci Flows on surfaces with boundary and positive scalar curvature}   
\author{Jean Carlos Cortissoz - Juan José Villamarín} 
\date{October  2023} 

\AtEndDocument{\bigskip{\footnotesize%
  \textsc{Department of Mathematics, Universidad de los Andes, Carrera 1 No. 18A - 10, 111711 Bogotá, Colombia} \par  
  \textit{E-mail address}, \texttt{jcortiss@uniandes.edu.co} \par
  \textit{E-mail address}, \texttt{jj.villamarin@uniandes.edu.co} 
  }}

\begin{document}

\maketitle

\begin{abstract}
We study the subsequential convergence of singular solutions to the Ricci flow with prescribed constant in space geodesic curvature on compact surfaces with boundary. Furthermore, we show that in the particular case of rotational symmetry, this convergence does not depend on the sign of the geodesic curvature of the boundary.
\end{abstract}

\tableofcontents

\section{Introduction}

We will consider the following PDE system on a compact surface with boundary $(M,g)$,
\begin{equation}\label{eq:Ricci flow}
    \begin{cases}
        \frac{\partial g}{\partial t} = -2Rc = -Rg, & \text{in} \hspace{5pt} M\times (0,T),\\
        H( t) = \psi (t) & \text{on} \hspace{5pt} \partial M \times (0,T),\\
        g(0) = g_0& \text{in} \hspace{5pt} M.
    \end{cases}
\end{equation}
Here $R$ denotes the scalar curvature of the metric $g$ (sometimes we will also employ the notation
$R_{g\left(t\right)}$), and $H$ is the geodesic curvature of the boundary.
From now on, we will assume that this is a singular solution to the Ricci flow, meaning that 
the maximal interval of existence
for the solution to this flow $\left(0,T\right)$ satisfies that
$T< \infty$. We shall also assume that $H(t) = \psi(t)$ is a real-valued function bounded in compact time intervals and constant in space, 
that is, it is only time-dependent. 
It is well known then, that in this case, the scalar curvature must blow up, that is 
\[
\limsup_{t\rightarrow T}\left(\sup_{p\in M} \left|R_{g\left(t\right)}\left(p\right)\right|\right)=\infty.
\]
There are some cases in which it is known that the solution blows up at a finite time. For instance, if $\partial R/ \partial N$ remains 
positive as long as the solution exists, or if $H \leq 0$, and $\mathcal{X}(M) > 0$, then the solution is singular \cite{Cortiss-Mur}.\\

%Now we will introduce the notation used throughout the document. For a given Riemannian metric $g$ on $M$, let $\nabla$ denote the
%corresponding Levi-Civita connection and $\hat{\nabla}$ the induced connection on the boundary $\partial M$. The scalar second fundamental 
%form of the boundary will be denoted by $h$. It will be computed with respect to the inward pointing unit normal $\eta$, as well as the 
%geodesic curvature $H$. On the other hand, the outward pointing unit normal will be denoted by $N$. Recall that under the Ricci flow, the 
%geodesic curvature satisfies the following evolution equation 
%\begin{equation*}
%    \frac{\partial R}{\partial N} = H R - 2H',
%\end{equation*}
%see \cite{Cortiss-Mur} for a proof of this. Additionally, $d\mu$ will denote the volume density of $M$, while $d\sigma$ will denote that of $\partial M$.\\

We now state the first result obtained in this work.

\begin{theorem}\label{th:1.1}
    Let $M$ be a compact surface with boundary endowed with a Riemannian metric $g_0$. Then, if $g(t)$ is the solution to the Ricci flow 
    (\ref{eq:Ricci flow}) with boundary condition $\psi\left(t\right)\geq 0$, and satisfies that 
    $R_{g\left(t\right)}>0$, then for any sequence $t_i \rightarrow T$ for which the scalar curvature blows up at maximal rate, it holds that
        \begin{equation}\label{eq:uniformized curvature}
        \lim_{i \rightarrow \infty}\frac{R_\text{max}(t_i)}{R_\text{min}(t_i)} = 1,
        \end{equation}    
        where $R_{\text{max}}\left(t\right)=\max_{p\in M}R_{g\left(t\right)}\left(p\right)$ and
        $R_{\text{min}}\left(t\right)=\min_{p\in M}R_{g\left(t\right)}\left(p\right)$.
        
    Furthermore, the corresponding solution $\Tilde{g}(\Tilde{t})$ to the normalized Ricci flow exists for all time, and for the 
    corresponding sequence $\Tilde{t}_n \rightarrow \infty$ the metrics $\Tilde{g}(\Tilde{t}_{n})$ converge smoothly, up to a subsequence, to 
    a metric of constant curvature and totally geodesic boundary.
\end{theorem}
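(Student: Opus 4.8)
The plan is to combine maximum-principle estimates on the unnormalized flow with a parabolic blow-up (rescaling) analysis, identifying the rescaled limit as a surface of constant curvature with totally geodesic boundary. First I would record the evolution equation
\[
\frac{\partial R}{\partial t} = \Delta R + R^{2},
\]
together with the Neumann-type condition that the prescription $H=\psi$ induces on $\partial R/\partial N$ along $\partial M$. Since $R_{g(t)}>0$ and $\psi\ge 0$, the sign condition makes the boundary term favorable for the maximum principle, so at an interior (or, via the Hopf lemma, boundary) spatial minimum one gets $\frac{d}{dt}R_{\min}\ge R_{\min}^{2}$; thus $R$ stays positive and the scale-invariant quantity $R_{\max}/R_{\min}$ is the right object to control. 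Because the flow is singular at $T<\infty$, the supremum of $|R|$ blows up, and I fix a sequence $t_i\to T$ realizing the maximal blow-up rate.

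Next I would set $Q_i=R_{\max}(t_i)$, attained at some $p_i\in M$, and define the parabolically rescaled flows $g_i(s)=Q_i\,g(t_i+Q_i^{-1}s)$ based at $p_i$, so that $R_{g_i}(p_i,0)=1=\sup_M R_{g_i}(\cdot,0)$. The maximal-rate hypothesis supplies uniform curvature bounds for $g_i$ on backward regions $M\times(-a,0]$, and crucially the geodesic curvature rescales as $Q_i^{-1/2}\psi\to 0$, so any boundary that survives in the limit is totally geodesic. Feeding in a non-collapsing input -- Hamilton's isoperimetric estimate for the surface Ricci flow, or $\kappa$-noncollapsing -- to obtain a uniform lower injectivity-radius bound at $p_i$, I would apply Cheeger--Gromov--Hamilton compactness (its boundary version when $p_i\to\partial M$) to extract a smooth pointed limit $(M_\infty,g_\infty(s),p_\infty)$: a complete ancient solution with $R_{g_\infty}\ge 0$, bounded curvature, totally geodesic boundary if nonempty, and $R_{g_\infty}(p_\infty,0)=1=\sup R_{g_\infty}(\cdot,0)$.

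The decisive step is to show this ancient limit has constant curvature. I would first upgrade $R_{g_\infty}\ge 0$ to $R_{g_\infty}>0$ by the strong maximum principle, then invoke Hamilton's trace Harnack inequality for surfaces together with his entropy monotonicity to conclude that the limit is a gradient shrinking soliton; on a surface the soliton equation forces the trace-free Hessian of the potential to vanish, hence $R_{g_\infty}$ to be constant, so $(M_\infty,g_\infty(0))$ is a round sphere, a flat surface, or a round spherical cap with geodesic boundary. Since $R_{g_\infty}(p_\infty,0)=1>0$, the flat case is excluded, the limit is compact of constant positive curvature, and in particular $M_\infty\cong M$. Compactness of the limit means that for large $i$ the metric $g_i(0)$ is $C^\infty$-close to a constant-curvature metric on all of $M$; as the conclusion holds along every subsequence, scale invariance gives $R_{\max}(t_i)/R_{\min}(t_i)=R_{\max,g_i}(0)/R_{\min,g_i}(0)\to 1$ for the full sequence, which is the first assertion.

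For the second assertion I would reparametrize by area to obtain the normalized flow $\frac{\partial \tilde g}{\partial\tilde t}=(r-R)\tilde g$, under which the finite-time singularity at $T$ corresponds to long-time existence $\tilde t\to\infty$; the sequence $t_i\to T$ corresponds to a sequence $\tilde t_n\to\infty$, and the normalized metrics $\tilde g(\tilde t_n)$ agree, up to scale, with the $g_i(0)$, so they subconverge smoothly to a metric of constant curvature with totally geodesic boundary. I expect the main obstacles to be, first, establishing the non-collapsing and injectivity-radius bounds together with the \emph{boundary} forms of the entropy and Harnack estimates uniformly as $p_i\to\partial M$, and second, ruling out noncompact blow-up limits such as cigar-type solutions, for which an isoperimetric estimate adapted to the prescribed geodesic curvature is the key tool.
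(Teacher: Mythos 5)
Your outline reproduces the general blueprint of the paper's argument (parabolic rescaling at a maximal-rate sequence, a non-collapsing input to rule out the cigar, identification of the compact limit as a round hemisphere with totally geodesic boundary, then transfer to the normalized flow), but the two steps you defer as ``main obstacles'' are not side issues: they are the actual content of the theorem, and the tools you name for them do not exist off the shelf in the boundary setting. First, the non-collapsing input. You propose ``Hamilton's isoperimetric estimate for the surface Ricci flow, or $\kappa$-noncollapsing,'' but both of these are closed-surface (or closed-manifold) results; neither Perelman's entropy monotonicity nor Hamilton's isoperimetric estimate survives the presence of a boundary without modification. The paper's central technical step is precisely the construction of a modified entropy
\[
\mathcal{W}_\infty = \frac{1}{4\pi\tau}\int_M [\tau(R+|\nabla f|^2)+f-2]e^{-f}d\mu + \frac{1}{4\pi}\int_{\partial M} 2H e^{-f}d\sigma,
\]
with the coupled Neumann condition $\partial f/\partial N = H$, whose time derivative picks up the boundary term $\frac{1}{4\pi}\int_{\partial M}2H|\hat\nabla f|^2 e^{-f}d\sigma$; monotonicity, and hence no-local-collapsing, holds exactly because $\psi \geq 0$. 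This is where the hypothesis $\psi\ge 0$ enters the proof, and your proposal never uses it in any substantive way. Without this (or some genuine replacement), the cigar cannot be excluded and the first assertion fails to follow.

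Second, your classification of the ancient limit invokes ``Hamilton's trace Harnack inequality for surfaces together with his entropy monotonicity'' to conclude the limit is a gradient shrinking soliton. Both of these are again theorems about closed surfaces (or complete surfaces without boundary); you would need boundary versions, which you do not supply. The paper avoids this entirely by a doubling trick: since the blow-up limit has totally geodesic boundary (or no boundary), one doubles it; a compact limit doubles to a closed ancient solution, to which Hamilton's classification (Theorem 26.1 of his surfaces paper) applies, giving the round hemisphere, while a noncompact limit is a type II model and hence (or its double is) a cigar, which is then killed by non-collapsing. Finally, two smaller but real gaps in your last paragraph: long-time existence of the normalized flow is not automatic from the finite-time singularity --- it requires $\int_0^T A(t)^{-1}dt = \infty$, which the paper gets from $A(t)\le 4\pi\mathcal{X}(M)(T-t)$ via Gauss--Bonnet; and the statement that $\tilde g(\tilde t_n)$ ``agree, up to scale, with the $g_i(0)$'' hides the need to show the relative scale factors $c_n = 1/(\lambda_n A(t_n))$ stay bounded, which the paper proves using $R_{\max}(t)A(t)\ge K$ (again Gauss--Bonnet, boundedness of $H$, and monotonicity of the boundary length under $R>0$). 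Without that bound the rescaled limits could degenerate.
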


Observe that this is an improvement of Theorem 1.1 in \cite{Cortiss-Mur}. Indeed, here we drop the assumption that the sign of the time derivative of the geodesic curvature is negative. The proof
of this result follows along the same lines of the analogue result in
\cite{Cortiss-Mur}, and is obtained by considering a modification of a Perelman-type entropy functional 
introduced by Lott in \cite{Lott}; see Section \ref{Sec:monotonicity}.\\

Furthermore, it is also possible to prove the following theorem: under the additional hypothesis of rotational symmetry, the subsequential convergence of the Ricci flow does not depend on the sign of the prescribed geodesic curvature $\psi$ of the boundary.

\begin{theorem}\label{th:1.2}
    Let $(M,g_0)$ be the disk with a rotationally symmetric metric $g_0$. Let $g(t)$ be the solution to the Ricci flow (\ref{eq:Ricci flow}) with boundary condition $\psi\left(t\right)$, and assume that $R_{g\left(t\right)}>0$ holds. Then for any sequence $t_i \rightarrow T$ for which the scalar curvature blows up at maximal rate, we have that 
    \begin{equation}\label{uniformized curvature 2}
        \lim_{i\rightarrow \infty}\frac{R_\text{max}(t_i)}{R_\text{min}(t_i)} = 1.
    \end{equation}
    Meaning that the scalar curvature uniformizes along any such sequence of times. Also,
    if $\psi\left(t\right)\leq 0$, regardless of the sign of the scalar curvature, then (\ref{uniformized curvature 2}) holds.
    Moreover, in both cases, the corresponding solution $\Tilde{g}(\Tilde{t})$ to the normalized Ricci flow exists for all time, and for the corresponding sequence $\Tilde{t}_n \rightarrow \infty$ there is a subsequence $\Tilde{t}_{n_k}$ such that $\Tilde{g}(\Tilde{t}_{n_k})$ converges smoothly to a (rotationally symmetric) metric of constant curvature and totally geodesic boundary.
\end{theorem}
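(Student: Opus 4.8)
The plan is to use rotational symmetry to reduce \eqref{eq:Ricci flow} to a scalar parabolic problem, to show that the singularity is one of positive curvature, and then to obtain the uniformization \eqref{uniformized curvature 2} together with the convergence from an entropy that, under the symmetry, is monotone for every sign of $\psi$. Since both the evolution $\partial_t g = -Rg$ and the condition $H = \psi(t)$ are invariant under rotations, the symmetry is preserved, and I may write $g(t) = e^{2u(r,t)}(dr^2 + r^2\,d\theta^2)$ on the unit disk with $u = u(r,t)$. The scalar curvature then solves $\partial_t R = \Delta R + R^2$, while $H = \psi(t)$ becomes a time-dependent Neumann-type condition relating $\partial R/\partial N$ to $\psi$, $\psi'$ and $R$ along $\{r = 1\}$.

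First I would fix the sign of the curvature at the singularity. By the maximum principle for $\partial_t R = \Delta R + R^2$, the interior minimum of $R$ is nondecreasing, and the boundary is handled through the Neumann condition; this is where the hypotheses split. In the first case $R > 0$ is assumed outright. In the second, the sign condition $\psi \le 0$ supplies exactly the sign needed in the Hopf-type boundary term to prevent $R_{\min}$ from decreasing at $\{r = 1\}$, so that $R_{\min}$ stays bounded below even when $R$ initially changes sign. In both cases the finite-time singularity is therefore of positive type: $R_{\max}(t) \to +\infty$ while $R_{\min}$ remains bounded below.

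The heart of the matter, and the step I expect to be the main obstacle, is the uniformization \eqref{uniformized curvature 2}. In the positive-curvature regime I would adapt the Lott-type entropy used for Theorem \ref{th:1.1}. Its monotonicity formula carries a boundary integral over $\{r = 1\}$ whose sign is favorable when $\psi \ge 0$ but not in general; the decisive new computation is that under rotational symmetry this integral collapses to a single term that can be controlled for every sign of $\psi$, so the entropy stays monotone up to $T$. Monotonicity then excludes a localized, cigar-type blow-up --- which would leave $R_{\min}(t_i)$ bounded and violate \eqref{uniformized curvature 2} --- and instead drives the metric toward a gradient soliton, forcing $R_{\min}(t_i) \to +\infty$ at the rate of $R_{\max}(t_i)$ and hence the ratio to $1$. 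When $\psi \le 0$ but $R$ changes sign the entropy is not even defined until $R > 0$; here the delicate point is to combine the positive-type nature of the singularity with a rotationally symmetric comparison to show that $R$ becomes positive on a region exhausting $M$ as $t \to T$, after which the same argument applies.

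Finally, for the convergence of the normalized flow I would rescale by $R_{\max}(t_i)$ and invoke Hamilton's compactness theorem, drawing the injectivity-radius lower bound from the rotationally symmetric geometry and the curvature bound from the maximal-rate hypothesis. The rescaled boundary geodesic curvature equals $\psi(t_i)/\sqrt{R_{\max}(t_i)} \to 0$, so any surviving boundary is totally geodesic, and the rotationally symmetric limit, carrying the constant curvature forced in the previous step, is the round hemisphere. Standard arguments as in Theorem \ref{th:1.1} and Hamilton's surface theory then give long-time existence of $\tilde g(\tilde t)$ and smooth subsequential convergence of $\tilde g(\tilde t_{n_k})$ to a rotationally symmetric metric of constant curvature with totally geodesic boundary.
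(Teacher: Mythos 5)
Your central entropy idea does match the paper's: restricting to rotationally symmetric test functions $f$, the troublesome boundary integral $\frac{1}{4\pi}\int_{\partial M}2H|\hat{\nabla}f|^2e^{-f}\,d\sigma$ in the monotonicity formula vanishes identically (since $\hat{\nabla}f=0$ on $\partial M$), so monotonicity holds for every sign of $\psi$. However, there is a genuine gap in how you get from monotonicity to excluding the cigar. ``Monotonicity drives the metric toward a gradient soliton, forcing $R_{\min}(t_i)\to+\infty$'' is not an argument; the actual chain is monotonicity $\Rightarrow$ no local collapsing $\Rightarrow$ the blow-up limit is $\kappa$-noncollapsed at all scales $\Rightarrow$ the cigar is excluded $\Rightarrow$ the blow-up limit is compact, hence a round hemisphere by Hamilton's classification, hence $R_{\max}(t_i)/R_{\min}(t_i)\to 1$. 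And the no-local-collapsing step is precisely where the new difficulty of the rotationally symmetric case lives: because the monotonicity formula only holds for rotationally symmetric $f$, the infimum $\mu_\infty$ must be taken over rotationally symmetric test functions, and Perelman's standard cutoff $e^{-c/2}\phi(d(x,\cdot)/r)$ supported in a ball $B(x,r)$ around the collapsing point $x$ is \emph{not admissible} unless $x$ is the origin. The paper resolves this by localizing instead on the annulus $A=\{p:|d(O,p)-d(O,x)|\le r\}$ via $\Phi(p)=e^{-c/2}\phi(|d(O,p)-d(O,x)|/r)$ and proving $Vol(A)\le C\,Vol(B)$, so collapsing of $B$ still forces $\mu_\infty$ to be very negative. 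Your proposal never confronts this, and without it the entropy argument does not rule out a cigar centered away from the origin.

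The second gap is in the case $\psi\le 0$ with sign-changing $R$: your claim that ``the entropy is not even defined until $R>0$'' is false. The functional $\mathcal{W}_\infty$, like Perelman's $\mathcal{W}$, is defined for metrics of arbitrary curvature sign, and both the monotonicity formula and the no-local-collapsing theorem apply with no positivity assumption on $R$. What the blow-up analysis actually requires (Proposition \ref{Prop: blow-up limits}) is only a lower bound $R>-\epsilon$, which under $\psi\le 0$ comes from the maximum-principle/Hopf arguments of \cite{Cortiss-Mur}; so your proposed (and unproved) step of showing that ``$R$ becomes positive on a region exhausting $M$'' is both unnecessary and a dead end. Finally, for the last assertion, the paper does not rerun a Hamilton-type compactness argument: it proves long-time existence of the normalized flow (the area estimate when $\psi\ge 0$, the argument of Theorem 1.2 of \cite{Cortiss-Mur} when $\psi\le 0$) and then shows the rescaling factors $c_n$ relating $\Tilde{g}(\Tilde{t}_n)$ to the already-convergent blow-ups $\lambda_n g(t_n)$ are bounded ($R_{\max}(t)A(t)\ge K$ when $R>0$, monotone area in the other case); your sketch of this part is vaguer but repairable, whereas the two gaps above are substantive.
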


Recall that in the case of rotational symmetry and negative prescribed geodesic curvature, it was only known that the normalized Ricci flow exists for all time; see Theorem 1.2 in \cite{Cortiss-Mur}. Hence, Theorem (\ref{th:1.2}) above not only shows that, up to a subsequence, the normalized Ricci flow also converges to a metric of constant curvature and totally geodesic boundary, but that this convergence does not depend on the sign of the geodesic curvature.

\medskip
Our present work leaves open some interesting questions. For instance, if the flow exists for all time, and $\psi\left(t\right)\geq 0$ the
boundary condition becomes unbounded as $t\rightarrow \infty$, could it happen that $\sup_{p\in M}\left|R_{g\left(t\right)}\left(p\right)\right|\rightarrow 0$
as $t\rightarrow \infty$? How does a solution to the Ricci flow behave when $R_{g\left(0\right)}<0$ and
the prescribed geodesic curvature satisfies $\psi\left(t\right)>0$ for all $t>0$?

\medskip
The study of the Ricci flow on surfaces with boundary is not only interesting from a geometric point of view, as
it also has consequences on the behavior of solutions to the Logarithmic Diffusion Equation with nonlinear boundary conditions
(see for instance \cite{Cortiss-Reyes}),
thus the interest in our studies not only lies in possible geometric applications, but also in their
possible analytic consequences.

\medskip
This paper is organized as follows. In Section \ref{Sec:monotonicity} we introduce the modified Perelman-type
functional and show its monotonicity 
under (\ref{eq:Ricci flow}) given some
assumptions; in Section \ref{Sec:proof1.1} we give a proof of Theorem (\ref{th:1.1}); in Section \ref{Sec:proof1.2}
we give a proof of Theorem (\ref{th:1.2}).

%-------------------------------------------------------
%-------------------------------------------------------

\section{Monotonicity of the Modified Perelman´s Entropy Functional}\label{Sec:monotonicity}

This section is devoted to show that under the Ricci flow and the nonnegativity of the (prescribed) geodesic curvature of the boundary we have a monotonicity formula for the modified Perelman's entropy functional, which is defined below.

\medskip
Before we proceed, 
let us introduce the notation used throughout the document. For a given Riemannian metric $g$ on $M$, let $\nabla$ denote the
corresponding Levi-Civita connection and $\hat{\nabla}$ the induced connection on the boundary $\partial M$. The scalar second fundamental 
form of the boundary will be denoted by $h$. It will be computed with respect to the inward pointing unit normal $\eta$, as well as the 
geodesic curvature $H$. On the other hand, the outward pointing unit normal will be denoted by $N$. Recall that under the Ricci flow, the 
geodesic curvature satisfies the following evolution equation 
\begin{equation*}
    \frac{\partial R}{\partial N} = H R - 2H',
\end{equation*}
see \cite{Cortiss-Mur} for a proof of this. Additionally, $d\mu$ will denote the volume density of $M$, while $d\sigma$ will denote that of $\partial M$.\\

\begin{definition}
    Let $\mathcal{W}_\infty$ be the functional defined by 
    \begin{equation}
        \mathcal{W}_\infty:= \frac{1}{4\pi\tau} \int_M [\tau(R + |\nabla f|^2) + f-2]e^{-f}d\mu + \frac{1}{4\pi}\int_{\partial M}2H e^{-f}d\sigma.
    \end{equation}
\end{definition}

\begin{remark}
    \begin{enumerate}
        \item Observe this is a modification of Lott's $I_\infty$ functional defined in \cite{Lott}. In fact, we have
    \begin{equation*}
        \mathcal{W}_\infty = \frac{1}{4\pi}I_\infty + \frac{1}{4\pi \tau} \int_M (f-2)e^{-f}d\mu.
    \end{equation*}
    \item Furthermore, a similar type of functional has also been used by Ecker in \cite{Ecker}. Indeed, he considered the functional 
    \begin{equation*}
        \mathcal{W}_\beta = \int_\Omega (\tau| \nabla f|^2 + f - n) e^{-f}(4\pi\tau)^{-n/2}d\mu + 2\tau \int_{\partial \Omega}\beta e^{-f/2}(4\pi\tau)^{-n/2}d\sigma,
    \end{equation*}
    where $f$ and $\beta$ are smooth functions on $\Omega$ and $\partial \Omega$ respectively. Note that when $\beta = H$, the mean curvature of the boundary, then $\mathcal{W}_H$ is the functional $\mathcal{W}_\infty$ defined above in the case $R = 0$.
    \end{enumerate}
\end{remark}
Let us now compute the variation of this functional under the following assumptions. To do this, we follow \cite{Sesum-Tian}. First, let us denote 
\begin{equation*}
\frac{\partial}{\partial s} g_{ab} = v_{ab}, \text{ and }\hspace{5pt} \frac{\partial}{\partial s} f = \Tilde{f},
\end{equation*}
and suppose the measure $dm: = e^{-f}d\mu$ is fixed \textit{i.e.} $\frac{\partial}{\partial s}dm = 0$. Therefore, 
\begin{equation*}
    \Tilde{f} = \frac{1}{2}V := \frac{1}{2} v^{a}{}_a.
\end{equation*}
Thus, we simply need to compute the variation of $R + |\nabla f|^2$.
\begin{align*}
    \frac{\partial}{\partial s}(R + |\nabla f|^2) &= -\Delta V + \nabla^{a}V\nabla_a f + \nabla^{a}\nabla^bv_{ab} - v^{ab}(R_{ab} + \nabla_af \nabla_bf) .  
\end{align*}
Multiplying by $e^{-f}$ we obtain the following equality:
\begin{align*}
    e^{-f}(R + |\nabla f|^2) & = -v^{ab}(R_{ab} + \nabla_af\nabla_bf)e^{-f} \\
    &\phantom{=\ }- \nabla_a((\nabla^{a}V)e^{-f}) + (\nabla^{a}\nabla^bv_{ab})e^{-f}\\
    &= -v^{ab}(R_{ab} + \nabla_af\nabla_bf)e^{-f}- \nabla_a((\nabla^{a}V)e^{-f})\\
    & \phantom{=\ } + \nabla^{a}((\nabla^bv_{ab})e^{-f}) - \nabla^bv_{ab}\nabla^{a}e^{-f}\\
    &=  -v^{ab}(R_{ab} + \nabla_af\nabla_bf)e^{-f}- \nabla_a((\nabla^{a}V)e^{-f})\\
    & \phantom{=\ } + \nabla^{a}((\nabla^bv_{ab})e^{-f}) - \nabla^b(v_{ab}\nabla^{a}e^{-f}) + v_{ab}\nabla^{a}\nabla^be^{-f}\\
    &= -v^{ab}(R_{ab} + \nabla_a\nabla_bf)e^{-f}- \nabla_a((\nabla^{a}V)e^{-f})\\
    & \phantom{=\ } + \nabla^{a}((\nabla^bv_{ab})e^{-f} - v_{ab}\nabla^be^{-f}).
\end{align*}
Combining the calculations above and the contributions of the $f-2$ and boundary terms, we obtain the variation of the functional.
\begin{proposition}
    Under the hypotheses above, the variation of the functional reads as follows.
    \begin{equation}\label{eq:variation}
        \begin{split}
            \frac{\partial}{\partial s}\mathcal{W}_\infty &= \frac{1}{4\pi}\int_{M}-v^{ab}(R_{ab} + \nabla_a\nabla_bf - \frac{1}{2\tau}g_{ab})e^{-f}d\mu\\
            & \phantom{=\ }+ \frac{1}{4\pi}\int_{\partial M}\left(2\frac{\partial H}{\partial s} - H v^2{}_2\right)e^{-f}d\sigma - \frac{1}{4\pi}\int_{\partial M}\frac{\partial V}{\partial N}e^{-f}d\sigma\\
            & \phantom{=\ } + \frac{1}{4\pi}\int_{\partial M}(\nabla_bv^{ab} + v^{ab}\nabla_bf)N_ae^{-f}d\sigma,
        \end{split}
    \end{equation}
    where $N$ represents the outward pointing unit normal on $\partial M$.
\end{proposition}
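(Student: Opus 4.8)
The plan is to differentiate $\mathcal{W}_\infty$ one summand at a time, using the normalization $\partial_s dm = 0$ — equivalently $\Tilde f = \tfrac12 V$ with $V = v^{a}{}_a$ — to commute the $s$-derivative past the fixed measure $dm = e^{-f}d\mu$ in every interior integral, and holding the scale $\tau$ fixed throughout. Accordingly I would first split the functional into the principal interior piece $\tfrac{1}{4\pi}\int_M (R + |\nabla f|^2)\,dm$, the lower-order interior piece $\tfrac{1}{4\pi\tau}\int_M (f-2)\,dm$, and the boundary piece $\tfrac{1}{4\pi}\int_{\partial M} 2H e^{-f}\,d\sigma$, and treat each in turn, collecting all boundary contributions only at the very end.

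For the principal piece I would feed in the pointwise identity already established above, namely that $e^{-f}\,\partial_s(R + |\nabla f|^2)$ equals $-v^{ab}(R_{ab} + \nabla_a\nabla_b f)e^{-f}$ plus the pure divergence $\nabla_a Y^a$, where $Y^a = e^{-f}\bigl(-\nabla^a V + \nabla_b v^{ab} + v^{ab}\nabla_b f\bigr)$ (here $\nabla_b e^{-f} = -e^{-f}\nabla_b f$ has been used to turn $-v_{ab}\nabla^b e^{-f}$ into $v^{ab}\nabla_b f\, e^{-f}$). Integrating over $M$ and applying the divergence theorem with respect to the outward unit normal $N$ turns $\int_M \nabla_a Y^a\,d\mu$ into $\int_{\partial M} Y^a N_a\,d\sigma$; since $\nabla^a V\, N_a = \partial V/\partial N$, this is exactly the source of the two boundary terms $-\partial V/\partial N$ and $(\nabla_b v^{ab} + v^{ab}\nabla_b f)N_a$ in (\ref{eq:variation}), while the undifferentiated interior integrand contributes $-v^{ab}(R_{ab} + \nabla_a\nabla_b f)e^{-f}$.

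For the lower-order interior piece, the measure being fixed means only the factor $f-2$ is differentiated, so $\partial_s \int_M (f-2)\,dm = \int_M \Tilde f\,dm = \int_M \tfrac12 V e^{-f}\,d\mu$. Writing $\tfrac12 V = \tfrac12 g_{ab}v^{ab}$ and restoring the prefactor $\tfrac{1}{4\pi\tau}$, this produces precisely the term $+\tfrac{1}{2\tau}g_{ab}v^{ab}$ that completes the weighted expression $R_{ab} + \nabla_a\nabla_b f - \tfrac{1}{2\tau}g_{ab}$ inside the first integral of (\ref{eq:variation}). The boundary piece I would differentiate directly, using $\partial_s e^{-f} = -\Tilde f e^{-f}$ together with the first-variation formula for the induced line element, $\partial_s d\sigma = \tfrac12 (\operatorname{tr}_{\partial M} v)\, d\sigma$. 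Combining the resulting $-2H\Tilde f$ and $+H\operatorname{tr}_{\partial M} v$ and substituting $\Tilde f = \tfrac12 V$, the full trace minus its tangential part collapses to the normal–normal component, so that $v^2{}_2 = V - \operatorname{tr}_{\partial M} v$ and the leftover is $-H v^2{}_2$; together with the $2\,\partial H/\partial s$ term this gives the remaining boundary integrand. Summing the three pieces with the common factor $\tfrac{1}{4\pi}$ yields (\ref{eq:variation}).

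The step I expect to be the main obstacle is the boundary bookkeeping. One must keep the two distinct \emph{sources} of boundary data separate and then combine them correctly: the terms thrown off by the divergence theorem applied to the interior integral in the second step, and the genuine variation of the explicit boundary integral in the last step. In particular the delicate points are the sign and form of the first variation of the one-dimensional measure $d\sigma$ and the clean tangential/normal decomposition $V = \operatorname{tr}_{\partial M} v + v^2{}_2$, which is what makes the coefficient of $H$ reduce to $-v^2{}_2$ rather than to some combination of interior trace components. Once these are handled, the remainder is a routine application of the divergence theorem and the standard first-variation formulas for $d\mu$ and $d\sigma$.
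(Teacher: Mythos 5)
Your proposal is correct and follows essentially the same route as the paper: the paper's proof consists of exactly the interior pointwise identity you invoke (the variation of $R+|\nabla f|^2$ multiplied by $e^{-f}$ and reorganized into $-v^{ab}(R_{ab}+\nabla_a\nabla_b f)e^{-f}$ plus the divergence of your $Y^a$), followed by the divergence theorem and the combination with the $f-2$ and boundary contributions, which the paper merely asserts and you carry out explicitly. Your boundary bookkeeping --- $\partial_s d\sigma = \tfrac{1}{2}(\operatorname{tr}_{\partial M}v)\,d\sigma$, $\tilde f = \tfrac{1}{2}V$, and $V-\operatorname{tr}_{\partial M}v = v^2{}_2$ in the adapted orthogonal frame --- is precisely the computation the paper leaves implicit, and it is done correctly.
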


Now, let us define $v_{ab} = -2(R_{ab} + \nabla_a\nabla_bf - \frac{1}{2\tau}g_{ab})$. Hence, we have that 
\begin{equation*}
    V = -2 R - 2\Delta f + \frac{2}{\tau}.
\end{equation*}
In order to prove monotonicity, we proceed by computing the integrand of the last term in Equation (\ref{eq:variation}). To this end, we perform the calculations in a coordinate system $(x^1, x^2)$ such that $x^2 = 0$ is a local defining function for $\partial M$, $\{ \partial_1, \partial_2\}$ is an orthogonal frame and at some fixed (and arbitrary) time $t$ we have $\partial_2 = N$ on $\partial M$. Therefore,
\begin{align*}
    \nabla_b v^b{}_2 + v^b{}_2\nabla_b f &= \nabla_b (-Rg^b{}_2 - 2\nabla^b\nabla_2f + \frac{1}{\tau}g^b{}_2) + v^1{}_2 \nabla_1f + v^2{}_2\nabla_2f\\
    &= -\nabla_2R - 2\Delta \nabla_2 f \\
    &\phantom{=\ }+ (-Rg^1{}_2 - 2 \nabla^1\nabla_2f + \frac{1}{\tau}g^1{}_2)\nabla_1f + v^2{}_2\nabla_2f\\
    &= - \nabla_2R - 2\nabla_2\Delta f - 2 R^b{}_2\nabla_bf\\
    &\phantom{=\ }-2\nabla^1\nabla_2f\nabla_1f + v^2{}_2\nabla_2f\\
    &=  \nabla_2V + \nabla_2R - R\nabla_2f\\
    &\phantom{=\ } -2\nabla_1\nabla_2f\nabla^1f + v^2{}_2\nabla_2f.
\end{align*}

\noindent Recall that on $\partial M$ we have that 
\begin{equation*}
    \nabla_1\nabla_\eta f = \hat{\nabla}_1 \nabla_\eta f + h_{11}\hat{\nabla}^1f,
\end{equation*}
where $\eta = -\partial_2$ is the inward pointing normal. Thus, replacing this in the equation above, we have that 
\begin{align*}
    \nabla_b v^b{}_2 + v^b{}_2\nabla_b f &=  \nabla_2V + \nabla_2R - R\nabla_2f\\
    &\phantom{=\ } + 2h(\hat{\nabla}f, \hat{\nabla}f) - 2\nabla_1(\partial_2 f)\nabla^1 f +  v^2{}_2\nabla_2f.
\end{align*}
From now on, we will set $\frac{\partial f}{\partial N} = H$ on $\partial M$. Taking this into account, using the equation above and that $\partial R/\partial N = HR - 2H'$ on $\partial M$, each boundary term in Equation (\ref{eq:variation}) vanishes except for the integrals of $2h(\hat{\nabla}f, \hat{\nabla}f)$ and $ 2\nabla_1(\partial_2 f)\nabla^1 f$. Notice that the integral of the latter also vanishes, because $\partial_2f = -H$ is (spatially) constant on $\partial M$. Since the entropy functional $\mathcal{W}_\infty$ is invariant under rescalings and diffeomorphisms, we have shown that 

\begin{theorem}[Entropy monotonicity formula]\label{th:2.4}
    Under the evolution equations
    \begin{equation}
        \begin{cases}
            \frac{\partial}{\partial t}g_{ab} = -2R_{ab} = -Rg_{ab},\\
            H(t) = \psi(t),\\
            \frac{\partial f}{\partial t} = -R - \Delta f + |\nabla f|^2 + \frac{1}{\tau},\\
            \frac{\partial f}{\partial N} = H, \hspace{10 pt} \text{on } \partial M\times [0,T)\\
            \frac{\partial \tau}{\partial t} = -1,
        \end{cases}    
    \end{equation}
    and the additional assumption that $H(t) \geq 0$, we have
    \begin{equation}\label{eq: monotonicity}
        \begin{split}
            \frac{d}{dt}\mathcal{W}_\infty &= \frac{1}{4\pi}\int_M2\left|R_{ab} + \nabla_a\nabla_bf - \frac{1}{2\tau}g_{ab}\right|e^{-f}d\mu\\
            &\phantom{=\ } + \frac{1}{4\pi}\int_{\partial M} 2H |\hat{\nabla} f|^2e^{-f}d\sigma \geq 0.
        \end{split}
    \end{equation}
\end{theorem}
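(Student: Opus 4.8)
The plan is to read the monotonicity off directly from the first variation formula \eqref{eq:variation}, by feeding it the distinguished variation direction and then stripping away the diffeomorphism and scaling pieces via invariance. First I would substitute the gradient–soliton tensor $v_{ab} = -2\left(R_{ab} + \nabla_a\nabla_b f - \frac{1}{2\tau}g_{ab}\right)$ into \eqref{eq:variation}. With this choice the interior integrand $-v^{ab}\left(R_{ab} + \nabla_a\nabla_b f - \frac{1}{2\tau}g_{ab}\right)e^{-f}$ becomes $2\left|R_{ab} + \nabla_a\nabla_b f - \frac{1}{2\tau}g_{ab}\right|^2 e^{-f}$, which is precisely the interior term of \eqref{eq: monotonicity}. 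This part is routine.

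The core of the argument is the boundary, and I would organize it around the local computation of $\nabla_b v^b{}_2 + v^b{}_2\nabla_b f$ already displayed above. Adding the three boundary integrals of \eqref{eq:variation} and cancelling term by term, with $\partial_2 = N$ on $\partial M$ so that $\nabla_2 = \partial/\partial N$ and $\nabla_2 f = H$: the second integral contributes $-\nabla_2 V$, cancelling the $\nabla_2 V$ produced by the third; the prescribed law $\partial R/\partial N = HR - 2H'$ converts $2\partial_s H$ into $HR - \nabla_2 R$, which cancels the surviving $\nabla_2 R$ and, using $\nabla_2 f = H$, the term $-R\nabla_2 f$; and $-Hv^2{}_2$ cancels $v^2{}_2\nabla_2 f$. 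What remains is $2h(\hat\nabla f,\hat\nabla f) - 2\nabla_1(\partial_2 f)\nabla^1 f$. After the splitting identity $\nabla_1\nabla_\eta f = \hat\nabla_1\nabla_\eta f + h_{11}\hat\nabla^1 f$ has extracted the second–fundamental–form contribution, the second term is the tangential derivative of the normal derivative $\partial f/\partial N = H$, which is spatially constant and hence vanishes; since $\partial M$ is one–dimensional, $h(\hat\nabla f,\hat\nabla f) = h_{11}|\hat\nabla f|^2 = H|\hat\nabla f|^2$, yielding the boundary term of \eqref{eq: monotonicity}.

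The remaining conceptual step is to pass from this computed $s$–variation to the genuine time derivative $\frac{d}{dt}\mathcal{W}_\infty$ under the coupled system. The chosen $v_{ab}$ equals the Ricci–flow direction $-2R_{ab}$ plus the extra terms $-2\nabla_a\nabla_b f + \frac{1}{\tau}g_{ab} = -\mathcal{L}_{\nabla f}g + \frac{1}{\tau}g_{ab}$, i.e. an infinitesimal diffeomorphism generated by $\nabla f$ together with a scaling; correspondingly the fixed–measure constraint forces $\partial_s f = \frac{1}{2}V = -R - \Delta f + \frac{1}{\tau}$, while the stated $f$–equation carries the additional $|\nabla f|^2 = \mathcal{L}_{\nabla f}f$. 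Thus the prescribed flow is the distinguished $s$–flow composed with a diffeomorphism along $\nabla f$ and a rescaling. Since $\mathcal{W}_\infty$ is invariant under diffeomorphisms and rescalings, these extra motions leave its value unchanged, so the $s$–variation with the distinguished $v_{ab}$ agrees with the actual time derivative. With $H \geq 0$ both surviving terms are manifestly nonnegative, giving $\frac{d}{dt}\mathcal{W}_\infty \geq 0$.

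I expect the main obstacle to be the boundary bookkeeping: keeping the sign conventions for $N$, $\eta$, and the scalar second fundamental form $h$ consistent while the three boundary integrals collapse, and in particular trading $2\partial_s H$ for $HR - \nabla_2 R$ correctly through the geodesic–curvature evolution law so that exactly the $2H|\hat\nabla f|^2$ term survives. By contrast, the interior computation and the closing invariance step are comparatively mechanical once this cancellation is secured.
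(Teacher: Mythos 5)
Your proposal is correct and follows essentially the same route as the paper: plug the gradient--soliton direction $v_{ab}=-2\left(R_{ab}+\nabla_a\nabla_b f-\frac{1}{2\tau}g_{ab}\right)$ into the first variation formula \eqref{eq:variation}, carry out the boundary cancellations via the displayed computation of $\nabla_b v^b{}_2+v^b{}_2\nabla_b f$ together with $\partial R/\partial N=HR-2H'$ and $\partial f/\partial N=H$, and conclude by diffeomorphism and scaling invariance (your bookkeeping is in fact cleaner than the paper's, which has the sign slip $\partial_2 f=-H$ and omits the square in $\left|R_{ab}+\nabla_a\nabla_b f-\frac{1}{2\tau}g_{ab}\right|^2$ that your version correctly restores).
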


\begin{remark}
    As a matter of fact, in both \cite{Lott} and \cite{Ecker} are shown entropy formulas for the functionals considered in them. Thus, the result above is a combination of their ideas applied to the case of the Ricci flow in compact surfaces with boundary.
\end{remark}

%-------------------------------------
%-------------------------------------

\section{Proof of Theorem 1.1}
\label{Sec:proof1.1}

In this section, we will give an outline of the proof for Theorem (\ref{th:1.1}). With this purpose in mind, let us recall a proposition 
due to \cite{Cortiss-Mur}. We must observe that, due to the hypotheses of Theorems (\ref{th:1.1}) and 
(\ref{th:1.2}), blow-up limits can be taken
as is shown in Sections 4 and 6 in \cite{Cortiss-Mur}.  We also refer the reader to \cite{Gianniotis}, where some compactness results are demonstrated in the case of manifolds with boundary.
\begin{proposition}\label{Prop: blow-up limits}
    Let $(M,g(t))$ be the solution to the Ricci flow (\ref{eq:Ricci flow}) and $[0,T)$ be the maximal interval of existence of this solution. Assume $H$ is bounded and there is an $\epsilon > 0$ such that on this interval $R > -\epsilon$. Hence, there are two possible blow-up limits for $(M,g(t))$ as $t\rightarrow T$. If the blow-up limit is compact, then it is a homothetically shrinking round hemisphere with totally geodesic boundary. While, if it is noncompact, it is (or its double is) a cigar soliton.
\end{proposition}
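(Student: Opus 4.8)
The plan is to reproduce, in the category of surfaces with boundary, Hamilton's blow-up analysis for the Ricci flow on closed surfaces. First I would select a sequence $(p_i,t_i)$ with $t_i\to T$ by a point-picking argument that makes the curvature at $(p_i,t_i)$ essentially maximal on the slab $M\times[0,t_i]$, set $Q_i=R_{g(t_i)}(p_i)\to\infty$, and form the parabolically rescaled flows $g_i(t)=Q_i\,g(t_i+t/Q_i)$ on the expanding time intervals $[-Q_it_i,\,Q_i(T-t_i))$. Two normalizations drive the argument. On the one hand, the lower bound $R>-\epsilon$ rescales to $R_{g_i}>-\epsilon/Q_i\to 0$, so any limit flow will have $R\geq 0$. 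On the other hand, the geodesic curvature has the dimension of an inverse length and therefore rescales as $H_i=\psi/\sqrt{Q_i}$; since $\psi$ is bounded, $H_i\to 0$. This last point is exactly what forces the boundary of every blow-up limit to be totally geodesic, and it is the feature that distinguishes the present boundary setting from the closed case.

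Next I would extract a convergent subsequence. The point-picking guarantees a uniform curvature bound on the rescaled slices near the base points, and one supplements this with a non-collapsing (injectivity-radius) estimate; in the boundary setting the extraction then rests on a Cheeger--Gromov--Hamilton compactness theorem for manifolds with boundary, for which I would appeal to \cite{Gianniotis}. Because $-Q_it_i\to-\infty$, the limit $(M_\infty,g_\infty(t))$ is a complete \emph{ancient} solution, $t\in(-\infty,0]$, with bounded curvature and $R_{g_\infty}\geq 0$. Whether $M_\infty$ carries a boundary is dictated by the location of the base points: if $\mathrm{dist}_{g_i}(p_i,\partial M)$ stays bounded the limit has a (totally geodesic, by the computation above) boundary, and otherwise the limit is boundaryless.

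The final step is to classify $(M_\infty,g_\infty)$. In the boundaryless case this is Hamilton's classification of ancient solutions on surfaces with bounded nonnegative curvature: a compact one is a homothetically shrinking round sphere, and a noncompact one is the cigar soliton. In the case with boundary I would use that $\partial M_\infty$ is totally geodesic to double $M_\infty$ across its boundary; the doubled metric is a smooth boundaryless ancient solution, to which the previous classification applies. A compact double is a round sphere, and its two reflection-symmetric halves are round hemispheres with totally geodesic boundary, giving the first alternative; a noncompact double is a cigar, which is the content of the phrase that the limit ``is (or its double is)'' a cigar soliton.

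The step I expect to be hardest is the compactness: one must propagate the curvature and non-collapsing bounds uniformly up to $\partial M$ so that the boundary compactness theorem genuinely applies and the limit is a bona fide ancient solution with controlled geometry. The doubling reduction is conceptually transparent but also demands care, namely that reflection across a totally geodesic boundary produces a $C^\infty$ metric and that the ancient structure is preserved under it; once these analytic points are in place, the rigidity in Hamilton's classification finishes the proof.
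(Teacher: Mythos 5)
Your proposal follows essentially the same route as the paper's proof: the paper likewise realizes the blow-up limit as an ancient solution with totally geodesic boundary or no boundary at all (it simply cites \cite{Cortiss-Mur} for the point-picking, rescaling-of-$H$, and boundary-compactness construction you spell out), doubles across the totally geodesic boundary, and concludes via Hamilton's Theorem 26.1 in the compact case and via the cigar in the noncompact case (Propositions 9.16 and 9.18 of \cite{Chow-Ni-Lu}). The only caution is that the blanket ``classification of ancient solutions on surfaces with bounded nonnegative curvature'' you invoke is not a theorem in that form --- the flat plane in the noncompact case, and the King--Rosenau (sausage) solution in the compact case, satisfy those hypotheses without being a cigar or a round sphere --- so what must actually be used, as in the paper's citations, is the Type I/Type II dichotomy together with the non-flatness and attained-maximum normalization $R_{g_\infty}\le 1=R_{g_\infty}(p_\infty,0)$ that your point-picking provides.
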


\begin{proof}
    Recall that the blow-up limit of a maximal solution $(M, g(t))$ to the Ricci flow (\ref{eq:Ricci flow}) is an ancient solution $(M_\infty, g_\infty(t), p_\infty)$ with totally geodesic boundary or no boundary at all; see \cite{Cortiss-Mur}. Then, in the first case, when the blow-up limit is compact, by doubling the manifold we obtain an ancient solution to the Ricci flow on a closed surface, and applying Theorem 26.1 of \cite{Hamilton} we conclude that the blow-up limit is a round hemisphere, since $R_\infty(p_\infty) > 0$. In the second case, the blow-up limit is noncompact; therefore, it is a singularity model of type II, and consequently it (or its double) is a cigar soliton, see Propositions 9.16 and 9.18 of \cite{Chow-Ni-Lu}.
\end{proof}

Now we state the corresponding version of Perelman's \textit{no local collapsing theorem} \cite{perelman2002} for the modified functional in the case of surfaces with boundary.

\begin{theorem}[No local collapsing theorem I]\label{th:3.3}
    Let $(M,g(t))$ be a maximal solution to the Ricci flow (\ref{eq:Ricci flow}) on a compact surface with boundary, with $t \in [0, T)$. Assume that the prescribed geodesic curvature $\psi$ is nonnegative. If $T < \infty$, then for any $\rho \in (0,\infty)$ there exists $k = k(g(0),T, \rho)>0$ such that the solution $g(t)$ is $k$-noncollapsed below the scale of $\rho$ as long as the solution exists.
\end{theorem}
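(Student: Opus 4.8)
The plan is to establish the no-local-collapsing statement by adapting Perelman's original argument to the boundary setting, using the entropy monotonicity formula already proved in Theorem \ref{th:2.4}. The key quantitative object is the reduced volume or, equivalently, the infimum of the functional $\mathcal{W}_\infty$ over admissible test functions $f$, which Theorem \ref{th:2.4} shows is monotone nondecreasing along the flow precisely because $\psi = H \geq 0$ kills the bad boundary term. First I would recall the definition of $k$-noncollapsing at scale $\rho$: at a point $(p, t_0)$ and radius $r < \rho$, if $|R_{g(t_0)}| \leq r^{-2}$ throughout the parabolic ball, then one wants a lower bound $\mathrm{Vol}(B(p, r)) \geq k \, r^{n}$ (here $n = 2$). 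I would set up the contradiction: suppose no such $k$ exists, so there is a sequence of points and radii along which the volume ratio degenerates to zero while the curvature stays controlled.

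Next I would construct the standard test function: take $f$ to be a logarithmic bump function concentrated on the geodesic ball $B(p, r)$, of the form $e^{-f} = c\, \phi(d(\cdot, p)/r)^2$ with $\phi$ a fixed cutoff and $c$ chosen so that the normalization $\frac{1}{(4\pi\tau)^{n/2}}\int e^{-f} d\mu = 1$ holds (with $\tau \sim r^2$). Plugging this into $\mathcal{W}_\infty$ and estimating term by term, one finds that $\mathcal{W}_\infty(g(t_0), f, \tau) \to -\infty$ as the volume ratio collapses, because the dominant contribution is the entropy term $\int f e^{-f}$, which behaves like $\log(\mathrm{Vol}(B(p,r))/r^n)$. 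The boundary contribution $\frac{1}{4\pi}\int_{\partial M} 2H e^{-f}\, d\sigma$ must be controlled here: since $\psi$ is bounded on compact time intervals by hypothesis, and $e^{-f}$ is supported in a ball of radius $r$ whose boundary trace has $(n-1)$-measure at most comparable to $r^{n-1}$, this term is $O(r)$ and hence negligible compared to the diverging entropy term. This is the place where the nonnegativity and boundedness of $\psi$ both enter in a controlled way.

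Finally I would close the argument by monotonicity: by Theorem \ref{th:2.4}, $\mathcal{W}_\infty$ is nondecreasing in $t$, so the value at the collapsing time $t_0$ is bounded below by its value at the initial time, $\mathcal{W}_\infty(g(t_0), \cdot) \geq \mathcal{W}_\infty(g(0), \cdot) =: \mu_0 > -\infty$, where the initial entropy is finite and depends only on $g(0)$ and on $T$ (through the range of $\tau$). Taking the infimum over admissible $f$ preserves this lower bound. But the test-function computation of the previous paragraph forces $\mathcal{W}_\infty \to -\infty$ along the collapsing sequence, contradicting the uniform lower bound $\mu_0$. This contradiction yields the desired $k = k(g(0), T, \rho) > 0$, with the dependence on $T$ entering through the bound on $\tau$ and through the compact-time-interval bound on $\psi$.

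I expect the main obstacle to be the rigorous treatment of the boundary terms when the concentration ball $B(p, r)$ meets $\partial M$, rather than sitting in the interior. In the interior case the computation is essentially Perelman's, but when $p$ lies on or near $\partial M$ one must verify that the cutoff test function still satisfies the Neumann-type compatibility implicit in the functional and that the boundary integral of $2H e^{-f}$ genuinely stays of lower order; here the boundedness of $\psi$ on $[0, t_0]$ and the fact that the totally-geodesic or controlled-$H$ hypothesis prevents the boundary term from blowing up are essential. A secondary technical point is ensuring the reduced-volume or $\mu$-functional infimum is attained (or approximated) by a smooth admissible $f$ respecting the boundary condition $\partial f / \partial N = H$, so that the monotonicity of Theorem \ref{th:2.4} may legitimately be invoked; this can be handled by a standard approximation argument as in \cite{Sesum-Tian}.
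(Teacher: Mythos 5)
Your proposal is correct and takes essentially the same approach as the paper's proof: propagate a lower bound on the initial entropy (obtained from the logarithmic Sobolev inequality, using $H \geq 0$ to reduce $\mathcal{W}_\infty$ to Perelman's $\mathcal{W}$) forward in time via the monotonicity formula of Theorem \ref{th:2.4}, then contradict it with Perelman's logarithmic cutoff test function concentrated on a collapsing ball, the boundary integral of $2He^{-f}$ being harmless since it is bounded by $\| H \|\,\mathrm{Vol}(\partial M \cap B)$. The only notable difference is technical bookkeeping: the paper handles times $t \leq T/2$ separately by compactness, so that the parameter $\tau = t + r^2$ stays in the compact interval $[T/2, T+\rho^2]$ away from $0$, a point your plan absorbs into the phrase ``through the range of $\tau$.''
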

\begin{proof}
    The idea of the proof is based on the arguments appearing in Theorem 5.35 in \cite{Chow-Ni-Lu}. Under the hypotheses above, we have that
    \begin{equation*}
        \mathcal{W}_\infty \geq \mathcal{W},
    \end{equation*}
    where $\mathcal{W}$ is the usual Perelman's entropy functional. Applying the logarithmic Sobolev inequality, we conclude that at $t = 0$ the functional $\mathcal{W}_\infty$ is bounded from below, \textit{i.e.},
    \begin{equation*}
        \mathcal{W}_\infty \geq \mathcal{W} \geq - C.
    \end{equation*}
    Since $M$ is compact, there is a $k_0$ such that $(M,g(t))$ is $k_0$-noncollapsed below the scale of $\rho$ for $t\leq T/2$, and since $T < \infty$,
    \begin{equation}\label{eq:8}
        \inf_{\tau \in [T/2, T + \rho^2]} \mu_\infty(g(0), \tau) := -C_1 > - \infty.
    \end{equation}
    When $t\in [T/2, T)$, then $t+ r^2\in [T/2, T + \rho^2]$ for any $r\leq\rho$ and using the monotonicity formula (\ref{eq: monotonicity}), we conclude that 
    \begin{equation}\label{eq:9}
        -C_1 \leq \mu_\infty(g(0), t + r^2) \leq \mu_\infty (g(t), r^2).
    \end{equation}
    In order to finish the proof, we will show that if $(M,g(t))$ were $k$-collapsed at the scale of $r$ at a time $t_0 \geq T/2$, then $\mu_\infty$ would be very large and negative. By definition, there is a point $p \in M$ such that for all $q\in B:= B(p,r)$ (at a time $t_0$), we have $|Rm (q)| \leq r^{-2}$ and $Vol(B)/r^2 \leq k$. Let $\Phi: = e^{-f/2}$, replacing this into the modified entropy functional we obtain the following.
    \begin{equation}\label{eq:10}
        \begin{split}
            \mathcal{W}_\infty &= \frac{1}{4\pi r^2}\int_M [(r^2R - 2\log \Phi - 2  )\Phi^2 + 4 r^2|\nabla \Phi|^2]d\mu\\
        &\phantom{=\ } + \frac{1}{4\pi}\int_{\partial M }2H \Phi^2d\sigma.
        \end{split}
    \end{equation}
    As suggested by Perelman in \cite{perelman2002}, let $\Phi(x) = e^{-c/2}\phi(d(p,x)/r)$, where $c$ is a normalization constant and $ \phi: \mathbb{R}^{\geq 0} \rightarrow \mathbb{R}$ is a smooth function equal to $1$ in $[0,1/2]$ decreasing monotonically to $0$ in $[1/2,1]$ and identically $0$ in $[1,\infty)$. Hence, it is well known that the first integral in Equation (\ref{eq:10}) is of the order of $\log k$. On the other hand, the second integral can be bounded as follows:
    \begin{equation*}
        \frac{1}{4\pi}\int_{\partial M}2H \Phi^2d\sigma < |\!| H |\!| Vol(\partial M \cap B).
    \end{equation*}
    Thus, the following inequality holds
    \begin{equation*}
          \mu_\infty(g(t_0), r^2) < \log k + |\!| H |\!| Vol(\partial M \cap B),
    \end{equation*}
    which is impossible by Equations (\ref{eq:8}) and (\ref{eq:9}).\\
\end{proof}

Recall that (for all $k>0$) the cigar is $k$-collapsed at some scale; therefore, we can now rule out the cigar soliton as the blow-up limit of singular solutions to the Ricci flow (\ref{eq:Ricci flow}) on a compact surfaces with boundary. To this end, we follow the arguments on Corollary 5.51 of \cite{Chow-Ni-Lu}. Suppose that $(M_\infty, g_\infty(t), p_\infty)$ is the blow-up limit of a solution $(M,g(t))$ to the Ricci flow (\ref{eq:Ricci flow}). Then, by Theorem (\ref{th:3.3}) we find that the initial manifold is $k$-noncollapsed at some scale $\rho$. Recall that the dilations $g_i$ of $g$ are defined by 
\begin{equation*}
    g_i(t) = \lambda_ig\left(t_i + \frac{t}{\lambda_i}\right),
\end{equation*}
where times $t_i$ and points $p_i$ are suitably chosen in such a way that $t_i \rightarrow T$ and 
\begin{equation*}
    \lambda_i = |R|_\text{max}(t) = R(p_i,t_i) \rightarrow \infty.
\end{equation*}
This implies that $(M_i,g_i(t))$ is $k$-noncollapsed below the scale of $\sqrt{\lambda_i}\rho$. Since $\lambda_i \rightarrow \infty$, we conclude that the blow-up limit $(M_\infty, g_\infty(t))$ is $k$-noncollapsed on all scales. In summary, we have shown the following.

\begin{corollary}[Precluding the cigar]
    Let $(M,g(t))$ be a compact surface with boundary which is a singular solution to the Ricci flow (\ref{eq:Ricci flow}) on a maximal interval $[0,T)$, with $R(t) \geq 0$ and $\psi\left(t\right) \geq 0$, then the cigar soliton cannot occur as its blow-up limit.
\end{corollary}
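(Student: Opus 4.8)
The plan is to argue by contradiction, playing the scale‑invariant noncollapsing supplied by Theorem \ref{th:3.3} against the known collapsing of the cigar. Suppose the blow‑up limit $(M_\infty, g_\infty(t), p_\infty)$ of $(M,g(t))$ is (or has as its double) a cigar soliton. The goal is to show that $(M_\infty, g_\infty)$ must be $k$‑noncollapsed at \emph{every} scale, which a cigar cannot be.

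First I would apply Theorem \ref{th:3.3}: because $\psi(t)\geq 0$ and $T<\infty$, for each fixed $\rho\in(0,\infty)$ there is $k=k(g(0),T,\rho)>0$ with $(M,g(t))$ being $k$‑noncollapsed below the scale $\rho$ on all of $[0,T)$. The crux is to follow this property through the parabolic dilations $g_i(t)=\lambda_i g(t_i+t/\lambda_i)$ that define the blow‑up. Under such a rescaling lengths scale by $\sqrt{\lambda_i}$, curvature by $\lambda_i^{-1}$, and, in dimension two, area by $\lambda_i$; a short computation shows that the volume ratio $Vol(B(p,r))/r^2$, and hence the constant $k$, is left unchanged, while the admissible scale is stretched to $\sqrt{\lambda_i}\,\rho$. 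Thus each $(M_i,g_i(t))$ is $k$‑noncollapsed below the scale $\sqrt{\lambda_i}\,\rho$.

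Since $\lambda_i\to\infty$, the threshold $\sqrt{\lambda_i}\,\rho\to\infty$, so passing to the pointed limit yields a limit $(M_\infty,g_\infty(t))$ that is $k$‑noncollapsed at all scales; here one uses that the volume‑ratio lower bound is a closed condition under smooth pointed convergence, and that the limit boundary, being totally geodesic or absent (Proposition \ref{Prop: blow-up limits}), does not interfere with the interior ball estimates. To finish I would contradict this with the asymptotics of the cigar: far out along its axis the cigar is close to a flat cylinder of bounded circumference, so a geodesic ball of large radius $r$ there has area growing only linearly in $r$ while its curvature stays below $r^{-2}$; hence $Vol(B(p,r))/r^2\to 0$ and the cigar is $k$‑collapsed at some scale for every $k>0$. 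This is incompatible with the all‑scales noncollapsing just established, so the cigar is excluded.

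I expect the main obstacle to be the middle step: confirming that the noncollapsing constant is genuinely scale‑invariant under the parabolic rescaling (so that $k$ persists while the scale grows like $\sqrt{\lambda_i}\,\rho$) and that this behaviour survives the passage to the limit. The boundary requires some care, but since the blow‑up limit has totally geodesic boundary or none, the relevant balls may be taken in the interior and the usual volume estimates apply without boundary corrections.
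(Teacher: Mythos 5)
Your proposal is correct and follows essentially the same argument as the paper: apply the no local collapsing theorem (Theorem \ref{th:3.3}), use the scale invariance of $k$-noncollapsing under the parabolic dilations $g_i(t)=\lambda_i g(t_i+t/\lambda_i)$ to conclude the blow-up limit is $k$-noncollapsed on all scales, and contradict the fact that the cigar is $k$-collapsed at some scale for every $k>0$. The paper simply cites Corollary 5.51 of \cite{Chow-Ni-Lu} for the details you spell out (the cylindrical asymptotics of the cigar and the persistence of the volume-ratio bound in the limit), so your write-up is, if anything, more self-contained.
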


By Proposition (\ref{Prop: blow-up limits}) we conclude that the blow-up limit is therefore a shrinking round hemisphere with totally geodesic boundary. This implies that along the sequence of times $t_i\rightarrow T$ considered above we have that 
\begin{equation*}
    \lim_{i\rightarrow \infty}\frac{R_\text{max}(t_i)}{R_\text{min}(t_i)} = 1.
\end{equation*}
This gives a proof of Equation (\ref{eq:uniformized curvature}). Finally, following the arguments in Section 5 of \cite{Cortiss-Mur} completes the proof of Theorem (\ref{th:1.1}). Indeed, recall that the normalized Ricci flow is defined as follows. Given a solution $g(t)$ to the Ricci flow (\ref{eq:Ricci flow}) let $\Tilde{g}(\Tilde{t}):= \phi (t)g(t)$ where 
\begin{align*}
    \phi(t)A(t) = A(0) \hspace{5pt} \text{and} \hspace{5pt}
    \Tilde{t}(t) = \int_0^t \phi(\tau)d\tau.
\end{align*}
Therefore, the normalized Ricci flow exists for all time if $\int_0^T1/A(t) dt = \infty$. To show this, observe that 
\begin{equation*}
    A'(t) = -\int_MRd\mu = - 4\pi \mathcal{X}(M) + \int_{\partial M}2Hd\sigma \geq - 4\pi \mathcal{X}(M).
\end{equation*}
So, we obtain that $A(t) \leq 4\pi \mathcal{X}(M)(T - t)$, which proves the result. 
Now, the sequence of times $t_n \rightarrow T$ considered above, induces a sequence $\Tilde{t}_n \rightarrow \infty$. By the previous 
arguments, we know that the metrics $\lambda_ng(t_n)$ converge smoothly to a metric of constant curvature $\Bar{g}$ as $n\rightarrow \infty$. 
Define $c_n>0$ in such a way that $\Tilde{g}(\Tilde{t}_n) = c_n\lambda_ng(t_n)$; hence, $c_n\lambda_n = 1/A(t_n)$. We claim that the sequence 
$c_n$ is bounded. To see this, note that 
\begin{equation*}
    R_\text{max}(t)A(t) \geq \int_M Rd\mu = 4\pi\mathcal{X}(M) - \int_{\partial M} 2H d\sigma,
\end{equation*}
and since $H$ is bounded we have that 
\begin{equation*}
    -\int_{\partial M} H d\sigma \geq - |\!| H |\!| l(\partial M),
\end{equation*}
and since $R$ remains positive, the length of the boundary is decreasing in time. This implies there is a constant $K> 0$ such that 
\begin{equation*}
    R_\text{max}(t)A(t) \geq K.
\end{equation*}
Thus, $c_n = 1/(\lambda_nA(t_n))\leq 1/ K$, as we wanted to show. So, there is a subsequence $c_{n_k}$ converging to some $c$ and consequently the corresponding subsequence $\Tilde{g}(\Tilde{t}_{n_k}) = c_{n_k}\lambda_{n_k}g(t_{n_k})$ converges smoothly to the metric of constant curvature $c\Bar{g}$.

%-------------------------
%-------------------------

\section{Proof of Theorem 1.2}
\label{Sec:proof1.2}
Now we prove Theorem (\ref{th:1.2}). Thus, in this section
we let $(M, g(t))$ be a rotationally symmetric solution to the Ricci flow (\ref{eq:Ricci flow}) in the disk. From now on, we will only consider rotationally symmetric functions $f$ for the functional $\mathcal{W}_\infty$.

\begin{remark}
    From now on, $\mu_\infty(g,\tau)$ will represent the infimum of the functional with the additional restriction of $f$ being rotationally symmetric.
\end{remark}

\begin{theorem}[Entropy monotonicity formulla II]
    In the setup of the paragraph above, we have 
    \begin{equation}\label{eq:monotonicity II}
        \frac{d}{dt} \mathcal{W}_\infty = \frac{1}{4\pi}\int_M 2\left| R_{ab} + \nabla_a\nabla_bf - \frac{1}{2\tau}g_{ab}\right|e^{-f}d\mu \geq 0.
    \end{equation}
\end{theorem}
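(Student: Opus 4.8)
The plan is to reduce this to the computation already carried out for Theorem \ref{th:2.4}, and then to exploit rotational symmetry in order to kill the remaining boundary term regardless of the sign of $H$. Concretely, I would start from the general first-variation formula (\ref{eq:variation}) and insert the same choices as before, namely $v_{ab} = -2(R_{ab} + \nabla_a\nabla_b f - \frac{1}{2\tau}g_{ab})$ together with the boundary gauge $\partial f/\partial N = H$ and the boundary evolution $\partial R/\partial N = HR - 2H'$. Exactly as in the derivation preceding Theorem \ref{th:2.4}, this turns the bulk integrand into the squared-tensor expression and forces all boundary contributions to cancel except the integral of $2h(\hat{\nabla}f,\hat{\nabla}f)$; the term $2\nabla_1(\partial_2 f)\nabla^1 f$ again drops out because $\partial_2 f = -H$ is spatially constant on $\partial M$. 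In dimension two the one-dimensional second fundamental form satisfies $h(\hat{\nabla}f,\hat{\nabla}f) = H|\hat{\nabla}f|^2$, so the single surviving boundary term is precisely $\frac{1}{4\pi}\int_{\partial M}2H|\hat{\nabla}f|^2 e^{-f}\,d\sigma$, matching formula (\ref{eq: monotonicity}).

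The decisive new observation is that, unlike in Theorem \ref{th:2.4}, I do not need the hypothesis $H \geq 0$ to control this term. For a rotationally symmetric $f$ on the disk, $f$ is constant along the boundary circle $\partial M$, and hence its tangential gradient vanishes, $\hat{\nabla}f \equiv 0$ on $\partial M$. Therefore the surviving boundary integral $\int_{\partial M}2H|\hat{\nabla}f|^2 e^{-f}\,d\sigma$ is identically zero for either sign of $\psi$, and only the manifestly nonnegative bulk term remains, which yields (\ref{eq:monotonicity II}).

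Before invoking this, I would verify the consistency of restricting attention to rotationally symmetric $f$. Since $g(t)$ is rotationally symmetric and the coupled system of Theorem \ref{th:2.4}, in particular the conjugate heat-type equation $\partial f/\partial t = -R - \Delta f + |\nabla f|^2 + 1/\tau$ with Neumann data $\partial f/\partial N = H$, is invariant under the circle action, rotationally symmetric initial data for $f$ remain rotationally symmetric; thus the entire computation takes place within the admissible class used to define $\mu_\infty$ in the preceding remark. As before, the invariance of $\mathcal{W}_\infty$ under rescalings and diffeomorphisms lets one pass from the $s$-variation to the $t$-derivative along the flow. I do not anticipate a genuine obstacle: the content of the statement is exactly that rotational symmetry supplies a sign-independent reason for the boundary term to vanish, replacing the curvature-sign hypothesis of Theorem \ref{th:2.4}. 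The only point demanding care is the identification $\hat{\nabla}f = 0$, which is immediate once one notes that $\partial M$ is an orbit of the symmetry group and $f$ is constant on orbits.
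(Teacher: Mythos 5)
Your proposal is correct and takes essentially the same route as the paper: both derive (or reuse) the variation formula from Theorem \ref{th:2.4}, note that the sign hypothesis $H\geq 0$ was only needed to control the boundary term $\int_{\partial M}2H|\hat{\nabla}f|^2e^{-f}\,d\sigma$, and observe that this term vanishes identically because a rotationally symmetric $f$ is constant on the boundary circle, so $\hat{\nabla}f=0$ there. Your check that rotational symmetry of $f$ is preserved under the coupled evolution matches the paper's first step (stated there for the diffeomorphism-modified flow, which is equivalent).
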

\begin{proof}
    First of all, it is important to mention that the rotational symmetry of $f$ is preserved under the equations 
    \begin{align*}
        \frac{\partial f}{\partial t} &= -\Delta f-R + \frac{1}{\tau},\\
        \frac{\partial f}{\partial N} &= H,
    \end{align*}
    corresponding to the evolution equation of $f$ under the flow  
    \begin{equation*}
        \frac{\partial}{\partial t}g_{ab} = -Rg_{ab} - 2\nabla_a\nabla_bf + \frac{1}{\tau}g_{ab},
    \end{equation*}
    which is equivalent to the Ricci flow. And secondly, observe that the term $$\frac{1}{4\pi}\int_{\partial M} 2H|\hat{\nabla}f|^2e^{-f}d\sigma$$ appearing in the monotonicity formula (\ref{eq: monotonicity}) vanishes since $f$ has rotational symmetry; therefore, $\hat{\nabla}f = 0$ in $\partial M$.
\end{proof}

Using arguments very similar to those employed in the previous section, 
we obtain a version for the \textit{no local collapsing theorem} 
in the case of rotational symmetry with no assumption on the sign of the prescribed geodesic curvature $\psi$. 

\begin{theorem}[No local collapsing theorem II] Let $g(t)$ be a rotationally symmetric maximal solution to the Ricci flow (\ref{eq:Ricci flow}) on the disk $M$ with $t \in [0,T)$. If $T< \infty$, then for any $\rho \in (0,\infty)$ there exists $k = k(g(0),T, \rho)>0$ such that the solution $g(t)$ is $k$-noncollapsed below the scale of $\rho$ as long as the solution exists.
\end{theorem}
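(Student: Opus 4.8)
The plan is to mirror the proof of Theorem \ref{th:3.3} line by line, substituting the rotationally symmetric monotonicity formula (\ref{eq:monotonicity II}) for (\ref{eq: monotonicity}). The decisive gain is that (\ref{eq:monotonicity II}) holds with \emph{no sign hypothesis} on $H$, because for rotationally symmetric $f$ one has $\hat{\nabla}f=0$ and the boundary term $\frac{1}{4\pi}\int_{\partial M}2H|\hat{\nabla}f|^2e^{-f}\,d\sigma$ that obstructed the sign-free case simply disappears. Recall that here $\mu_\infty(g,\tau)$ denotes the infimum of $\mathcal{W}_\infty$ taken over rotationally symmetric $f$; since the conjugate heat equation preserves rotational symmetry, monotonicity descends to this restricted infimum and yields, exactly as in (\ref{eq:9}), the chain $\mu_\infty(g(0),t+r^2)\le\mu_\infty(g(t),r^2)$ for $t\in[T/2,T)$ and $r\le\rho$. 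The proof thus reduces to two ingredients: a uniform lower bound for $\mu_\infty(g(0),\cdot)$ on $[T/2,T+\rho^2]$, as in (\ref{eq:8}), and the fact that $k$-collapsing drives $\mu_\infty$ to $-\infty$. Since the symmetric infimum dominates the unrestricted one, the first ingredient needs no symmetry, while the second does.

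First I would establish the lower bound. The shortcut $\mathcal{W}_\infty\ge\mathcal{W}$ used for Theorem \ref{th:3.3} is unavailable now that $H$ may be negative, so I would control the boundary term directly. With $\Phi=e^{-f/2}$ and the constraint $\int_M\Phi^2\,d\mu=4\pi\tau$, a trace inequality on the fixed surface $(M,g(0))$ provides, for every $\epsilon>0$, a constant $C_\epsilon$ with
\[
\int_{\partial M}\Phi^2\,d\sigma\le\epsilon\int_M|\nabla\Phi|^2\,d\mu+C_\epsilon\int_M\Phi^2\,d\mu.
\]
Reading (\ref{eq:10}) with $\tau$ in place of $r^2$, the bulk Dirichlet term carries coefficient $\tfrac1\pi$, so for $\epsilon$ small the negative boundary contribution $-\tfrac{|\!|H|\!|}{2\pi}\bigl(\epsilon\int_M|\nabla\Phi|^2\,d\mu+4\pi\tau\,C_\epsilon\bigr)$ is absorbed into the Dirichlet energy, leaving a bounded remainder. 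What survives is a positive multiple of $\int_M|\nabla\Phi|^2\,d\mu$, the entropy $\tfrac{1}{4\pi\tau}\int_M(-2\log\Phi)\Phi^2\,d\mu$, and the bounded curvature term $\tfrac{1}{4\pi}\int_MR\Phi^2\,d\mu$; the logarithmic Sobolev inequality on $(M,g(0))$ then bounds this from below, uniformly for $\tau\in[T/2,T+\rho^2]$ because $\tau$ stays bounded away from $0$. This gives (\ref{eq:8}).

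For the collapsing step I expect the only genuine difficulty. Perelman's cutoff $\Phi(x)=e^{-c/2}\phi(d(p,x)/r)$ centered at the collapsing point $p$ is not rotationally symmetric (unless $p$ is the center) and so is inadmissible for $\mu_\infty$. The remedy is to exploit the symmetry: in two dimensions, $|Rm|\le r^{-2}$ on $B(p,r)$ together with $Vol(B(p,r))/r^2\le k$ forces the injectivity radius at $p$ to be far below $r$, which for a rotationally symmetric metric means precisely that the orbit circle through $p$ is short, of length $\lesssim kr$. I would therefore use the \emph{radial} cutoff $\Phi=e^{-c/2}\phi(|s-s_p|/r)$, where $s$ is arclength from the center and $s_p$ the radial coordinate of $p$; this $\Phi$ is rotationally symmetric and supported in a symmetric collar about the short orbit whose volume is again $\lesssim kr^2$. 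The Perelman-type computation then returns a bulk value of order $\log k$, just as in Theorem \ref{th:3.3}, while the boundary integral is controlled by $|\!|H|\!|\,l(\partial M\cap\{|s-s_p|<r\})$, which is $O(kr)$ and hence negligible. Thus $\mu_\infty(g(t_0),r^2)<\log k+o(1)\to-\infty$ as $k\to0$, contradicting (\ref{eq:8})--(\ref{eq:9}).

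The main obstacle is precisely this last step: guaranteeing that a rotationally symmetric competitor still detects the collapse and reproduces the $\log k$ bound. This is where the symmetry hypothesis is indispensable, and also where it works in our favor, since the collapse of a rotationally symmetric surface is always a collapse along the orbit circles and is therefore seen by radial test functions.
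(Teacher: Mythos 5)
Your proposal is correct and follows essentially the same route as the paper's proof: the rotationally symmetric restricted infimum together with monotonicity formula (\ref{eq:monotonicity II}), the lower bound (\ref{eq:8}) at $t=0$, and, at the crux, the identical annular test function $\Phi = e^{-c/2}\phi(|d(O,\cdot)-d(O,x)|/r)$ localized around the orbit of the collapsing point, with the contradiction driven by the normalization constant $c \lesssim \log k$ once the collar volume is known to be small. Where the paper simply asserts the volume comparison $Vol(A)\leq C\,Vol(B)$ and reuses the $t=0$ lower bound ``as before,'' you justify the former via the injectivity-radius/short-orbit observation and the latter via a trace inequality (needed since $\mathcal{W}_\infty \geq \mathcal{W}$ is unavailable when $H<0$); these are refinements of detail within the same argument, not a different approach.
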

\begin{proof}
     As before, we have that at $t = 0$ the functional $\mathcal{W}_\infty$ is bounded from below, there is a $k_0$ such that $(M, g(t))$ is $k_0$-noncollapsed below the scale of $\rho$ for $t \leq T/2$ and Equation (\ref{eq:8}) holds again \textit{i.e.}     \begin{equation}
        \inf_{\tau \in [T/2, T + \rho^2]} \mu_\infty(g(0), \tau) := -C_1 > - \infty.
    \end{equation}
    This time applying monotonicity formula (\ref{eq:monotonicity II}) we obtain that for $t\geq T/2$
    \begin{equation}\label{eq: 13}
        -C_1 \leq \mu_\infty(g(0), t + r^2) \leq \mu_\infty(g(t),r^2).
    \end{equation}
    As before, the proof finishes noticing that if $(M,g(t))$ were collapsing at some $t_0 \geq T/2$, then the functional would be very large and negative contradicting Equation (\ref{eq: 13}). Thus, let $x \in M$ such that $|Rm(p)| \leq r^{-2}$ for all $p\in B:= B(x,r)$ and 
    \begin{equation*}
        \frac{Vol(B)}{r^2} \leq k.
    \end{equation*}
    Observe that the function $\Phi$ used before is not necesarilly rotationally symmetric, unless $x$ is the origin of $M$. Hence, to fix this, instead of considering a function localized in the ball of radius $r$ around $x$ (the point where collapse is occuring), we will consider a function localized in the annulus $A$ of length at most $2r$ containing this ball. Indeed, let $O$ be the origin of $M$ and define $\Phi$ as follows. 
    \begin{equation*}
        \Phi(p) := e^{-c/2}\phi(|d(O,p) - d(O,x)|/r),
    \end{equation*}
    where $\phi$ is the same as before.
    Now we proceed to bound the modified entropy functional.
    \begin{equation*}
        \begin{split}
            \mathcal{W}_\infty &= \frac{1}{4\pi r^2}\int_M [(r^2R - 2\log \Phi - 2  )\Phi^2 + 4 r^2|\nabla \Phi|^2]d\mu\\
        &\phantom{=\ } + \frac{1}{4\pi}\int_{\partial M }2H \Phi^2d\sigma\\
        &= \frac{1}{4\pi r^2}\int_A[(r^2R - 2\log \Phi - 2)\Phi^2 + 4r^2|\nabla \Phi|^2]d\mu\\
        &\phantom{=\ } + \frac{1}{4\pi}\int_{\partial M \cap A}2H\Phi^2d\sigma.
        \end{split}
    \end{equation*}
    Recall that $r^2R$ is bounded in $B$ and due to the rotational symmetry it is also bounded in all of $A$. Observe that $|\nabla \Phi| = e^{-c/2}|\phi'|/r$; therefore, $4r^2|\nabla \Phi|^2 = 4e^{-c}|\phi'|$ is bounded. Now, $-2\log \Phi = c - \log \phi$, and recall that $\phi^2\log \phi \rightarrow 0$ as $\phi \rightarrow 0$ and is bounded far from $0$
    (this argument has been taken from \cite{Calegari}). 
    Finally, the boundary term is bounded exactly the same as before. Hence, we conclude that at $t = t_0$
    \begin{equation*}
        \mathcal{W}_\infty \leq c + constant.
    \end{equation*}
    To estimate the normalization constant $c$, we use that $\frac{1}{4\pi r^2}\int_M \Phi^2d\mu = 1$.
    Since $\phi \leq 1$ and it is localized in $A$ we have the following inequality
    \begin{equation*}
        1 \leq \frac{e^{-c}Vol(A)}{r^2}.
    \end{equation*}
    Note that there is a constant $C>0$ depending on $t_0$ such that
    \begin{equation*}
        Vol(A) \leq C Vol(B).
    \end{equation*}
    This implies that $c \leq \log (kC)$ and consequently, we have that 
    \begin{equation*}
        \mathcal{W}_\infty \leq \log (kC)+ constant,
    \end{equation*}
    which is not possible by Equation (\ref{eq: 13}).
\end{proof}

Recall that as mentioned in the previous section, under the hypotheses of Theorem (\ref{th:1.2}) we can take blow-up limits, regardless of the sign of the scalar curvature.
Therefore, applying the theorem above we can again rule out the cigar as a blow-up limit of $(M,g(t))$.

\begin{corollary}
    Let $(M, g(t))$ be the disk with a singular rotationally symmetric solution to the Ricci flow (\ref{eq:Ricci flow}). Then, the cigar soliton cannot be its blow-up limit.
\end{corollary}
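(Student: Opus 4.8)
The plan is to reproduce, almost verbatim, the argument that precludes the cigar in Section \ref{Sec:proof1.1}, with the sole modification that the rotationally symmetric no-local-collapsing statement (No local collapsing theorem II) plays the role of Theorem \ref{th:3.3}. The underlying dichotomy is unchanged: by Proposition \ref{Prop: blow-up limits} the blow-up limit is either a shrinking round hemisphere or (its double is) a cigar soliton, and the cigar is the case we wish to exclude. Since, as noted just before the statement, blow-up limits can be taken here regardless of the sign of the scalar curvature, this dichotomy is available.

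First I would recall the standard fact that, for every $k>0$, the cigar soliton is $k$-collapsed at some scale; this is precisely the property the argument will contradict. Suppose then, for the sake of contradiction, that the noncompact blow-up limit $(M_\infty, g_\infty(t), p_\infty)$ of the rotationally symmetric singular solution $(M,g(t))$ is a cigar.

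Next I would invoke the No local collapsing theorem II, applied to the rotationally symmetric solution $g(t)$ on the disk: since $T<\infty$, there is a $k>0$ and a scale $\rho$ such that $g(t)$ is $k$-noncollapsed below $\rho$ for as long as it exists. I would then pass to the dilations $g_i(t)=\lambda_i g\left(t_i+t/\lambda_i\right)$, with $t_i\to T$ and $\lambda_i=|R|_\text{max}(t_i)\to\infty$, exactly as in Section \ref{Sec:proof1.1}. Because each $g_i$ is again rotationally symmetric, and because rescaling a $k$-noncollapsed metric by $\lambda_i$ yields a metric that is $k$-noncollapsed below the scale $\sqrt{\lambda_i}\,\rho$, every $g_i$ is $k$-noncollapsed below $\sqrt{\lambda_i}\,\rho$. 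Letting $\lambda_i\to\infty$ and passing to the pointed limit, the blow-up limit $(M_\infty,g_\infty(t))$ is $k$-noncollapsed on all scales, contradicting the collapsing of the cigar.

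The step I expect to require the most care is the passage to the limit: verifying that a pointed Cheeger--Gromov limit of manifolds that are uniformly $k$-noncollapsed (with a fixed $k$) is itself $k$-noncollapsed on all scales. Here the uniformity of $k$ is not an issue, since the noncollapsing is established once for the fixed solution $g(t)$ and only afterwards transported to the $g_i$ by scale invariance; the genuine content is the stability of the noncollapsing condition under convergence, which is supplied by the arguments of Corollary 5.51 in \cite{Chow-Ni-Lu}. A secondary point worth checking is that the centers $p_i$ of the dilations---the points of maximal curvature, which need not coincide with the origin in the rotationally symmetric setting---are covered by the noncollapsing conclusion; this is automatic, since $k$-noncollapsing below $\rho$ is a statement about every point of $M$.
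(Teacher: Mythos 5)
Your proposal is correct and follows essentially the same route as the paper: the paper likewise precludes the cigar by applying the rotationally symmetric No local collapsing theorem II, dilating by $\lambda_i$ so that the rescaled solutions are $k$-noncollapsed below $\sqrt{\lambda_i}\,\rho$, and passing to the blow-up limit (via the argument of Corollary 5.51 in \cite{Chow-Ni-Lu}) to conclude it is $k$-noncollapsed on all scales, contradicting the collapsedness of the cigar. Your additional remarks on the stability of noncollapsing under pointed limits and on the centers $p_i$ not being the origin are careful elaborations of steps the paper leaves implicit, not deviations from its proof.
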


Consequently, applying Proposition (\ref{Prop: blow-up limits}) once again, Equation (\ref{uniformized curvature 2}) holds under the hypotheses of Theorem (\ref{th:1.2}). To finish the proof, recall that in Section \ref{Sec:proof1.1} we have shown that when $\psi>0$, the normalized Ricci flow exists for all time; while, following the ideas in the proof of Theorem 1.2 in \cite{Cortiss-Mur} shows that the normalized Ricci flow exists for all time when $H < 0$. Thus, it remains to prove that the normalized Ricci flow converges to a metric of constant curvature and totally geodesic boundary up to a subsequence. This will follow if we are able to show that the sequence $c_n$ defined exactly the same as before is bounded. In the case in which $R>0$, this follows replicating the arguments of the previous section. While, if $R<0$ and $H>0$ we can consider a subsequence of times $t_i$ such that $\lambda_i$ is strictly increasing. Since $R$ is negative, the area of $M$ is increasing also, therefore 
\begin{equation*}
    |R|_\text{max}(t_n) A(t_n) \geq \lambda_0 A(t_0),
\end{equation*}
so that, $c_n= 1/(\lambda_n A(t_n))$ is bounded from above, as we wanted to show. Hence, we have shown subsequential convergence of the normalized Ricci flow to a metric of constant curvature, finishing the proof of Theorem (\ref{th:1.2}).

\bibliography{Referencias.bib}
    %\addcontentsline{toc}{section}{Bibliography}
    \bibliographystyle{alpha}

\end{document}